\documentclass[a4paper,12pt,leqno]{article}
\usepackage{amssymb}
\usepackage{amsxtra}
\usepackage{amsthm}
\usepackage[T1]{fontenc}
\usepackage{lmodern}
\usepackage[pagebackref,breaklinks]{hyperref}
\usepackage[lite,abbrev,shortalphabetic]{amsrefs}
\usepackage[shortlabels]{enumitem}
\usepackage{colonequals}
\theoremstyle{definition}
\newtheorem{defn}{Definition}[section]

\theoremstyle{plain}
\newtheorem{lemma}[defn]{Lemma}
\newtheorem{prop}[defn]{Proposition}

\newtheorem{theorem}[defn]{Theorem}
\numberwithin{equation}{section}
\setlist[enumerate,1]{(a)}

\newcommand{\Z}{\mathbb{Z}}
\newcommand{\Q}{\mathbb{Q}}
\newcommand{\R}{\mathbb{R}}

\begin{document}
\title{Fractional parts of powers of negative rationals}
\author{Qing Lu\thanks{School of Mathematical Sciences, Beijing Normal University, Beijing
100875, China; email: \texttt{qlu@bnu.edu.cn}.} \and Weizhe 
Zheng\thanks{Morningside Center of Mathematics, Academy of Mathematics and 
Systems Science, Chinese Academy of Sciences, Beijing 100190, China; 
University of the Chinese Academy of Sciences, Beijing 100049, China; email: 
\texttt{wzheng@math.ac.cn}.}}
\date{} \maketitle

\begin{abstract}
We prove that for any real number $\xi\neq 0$ and any coprime integers 
$p>q\ge1$ such that $\xi$ is irrational or $q>1$, the image in $\R/\Z$ of 
the sequence $(\xi (-p/q)^n)_{n\ge 0}$ is not contained in any interval of 
length less than $(1+q/p-q^2/p^2)/p$. 
\end{abstract}

\section{Introduction}
Distribution modulo one of geometric progressions has been studied 
extensively. One intriguing open question considered by Mahler \cite{Mahler} 
is the following: Does there exist a real number $\xi>0$ such that $\{\xi 
(3/2)^n\}<1/2$ for all $n\ge 0$? Here $\{x\}\colonequals x-\lfloor x\rfloor $ 
denotes the fractional part of a real number~$x$. Flatto, Lagarias, and 
Pollington \cite[Theorem 1.4]{FLP} proved that for any real number $\xi>0$ 
and coprime integers $p>q> 1$, we have 
\[
\limsup_{n\to \infty}\{\xi(p/q)^n\}
-\liminf_{n\to \infty}\{\xi(p/q)^n\}\ge 1/p.
\]
Dubickas (\cite[Theorem~1]{Dubickas-BLMS}, \cite[Theorem~2]{Dubickas}) proved 
a more general result which implies that for all real numbers $\xi\neq 0$ and 
$\eta$ and for any rational number $\alpha= p/q$ with integers $p>q\ge 1$ 
satisfying $\xi\notin \Q$ or $\alpha\notin\Z$, we have  
\begin{equation}\label{eq:Dubickas}
\limsup_{n\to \infty}\{\xi\alpha^n+\eta\} -\liminf_{n\to 
\infty}\{\xi\alpha^n+\eta\}\ge 1/p. 
\end{equation}
In fact, Dubickas' proof of \eqref{eq:Dubickas} also holds for $\alpha=-p/q$. 
In this paper, we improve the bound \eqref{eq:Dubickas} in the case where 
$\alpha$ is a negative rational number. 

\begin{theorem}\label{t:1}
Let $\xi\neq 0$ and $\eta$ be real numbers and let $p>q\ge 1$ be integers. 
Assume that $\xi\notin\Q$ or $p/q\notin \Z$. Then 
\begin{equation}\label{eq:t}
\limsup_{n\to \infty}\{\xi(-p/q)^n+\eta\} -\liminf_{n\to 
\infty}\{\xi(-p/q)^n+\eta\}\ge (1+r-r^2)/p, 
\end{equation}
where $r=q/p$.
\end{theorem}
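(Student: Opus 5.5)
We argue by contradiction, following the Dubickas approach adapted to the sign alternation. Put $x_n=\xi(-p/q)^n+\eta$ and write $x_n=y_n+z_n$ with $y_n\in\Z$ and $z_n\in[0,1)$. Suppose the left side of \eqref{eq:t} equals $\ell<(1+r-r^2)/p$. Replacing $\eta$ by a suitable constant and discarding finitely many terms, we may assume that for all $n\ge n_0$ the $z_n$ lie in an interval $[c,c+\ell']$ with $\ell<\ell'<(1+r-r^2)/p$.

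\textbf{Step 1: the basic recurrence.} From $qx_{n+1}=-px_n-(p+q)\eta$ we get
\[
qy_{n+1}+py_n=-(p+q)\eta-qz_{n+1}-pz_n .
\]
The left side is an integer $k_n$, so $k_n$ lies in an interval of length $(p+q)\ell'<1+r$ centered near $-(p+q)(\eta+c+\ell'/2)$. Hence $k_n$ takes at most two values, and if $(p+q)\ell'<1$ it is eventually almost constant. The key observation is that if $k_n$ were eventually constant, then $x_n$ would satisfy an exact inhomogeneous linear recurrence. This forces $\xi(-p/q)^n$ to equal an integer combination, and $p$-adic/denominator considerations contradict the hypothesis ($\xi\notin\Q$ or $q>1$). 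This is exactly the mechanism behind \eqref{eq:Dubickas}. So $k_n$ must take two consecutive values $k,k+1$ infinitely often, which already forces $\ell\ge 1/(p+q)$ roughly.

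\textbf{Step 2: a two-step recurrence to sharpen.} To reach $(1+r-r^2)/p$, I would also use the relation between $x_{n+2}$ and $x_n$. Here the ratio $p^2/q^2$ is positive and the signs are no longer alternating, giving $q^2y_{n+2}-p^2y_n=\dots-q^2z_{n+2}+p^2z_n$. The point of the negative base is that $z_{n+1}$ enters the one-step relation with the \emph{same} sign as $z_n$. Consequently, when $k_n$ jumps, the values $z_n$ and $z_{n+1}$ are pushed to opposite ends of the interval. Classify indices by whether $k_n=k$ or $k+1$. If $k_n=k+1$ occurs (meaning $pz_n+qz_{n+1}$ is small), then both $z_n$ and $z_{n+1}$ are near the bottom of the interval. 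A window of three consecutive indices $n,n+1,n+2$ with pattern of $k$-values switching then yields inequalities of the form $pz_n+qz_{n+1}\le A$ and $pz_{n+1}+qz_{n+2}\ge A+1$. Combining these with a weighted sum using weights $1,-r,r^2$, chosen so that the interior terms telescope against the interval bounds, gives $1\le \ell'(p+q-q^2/p)$. That is exactly $\ell'\ge (1+r-r^2)/p$, a contradiction.

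\textbf{Main obstacle.} The hard step is the combinatorial one: showing that the pattern of $k_n$'s must contain a configuration to which the weighted three-term inequality applies. Patterns such as eventually periodic $k,k+1,k,k+1,\dots$ must be excluded or handled separately. For that case I would show that an eventually periodic sequence $k_n$ again makes $x_n$ satisfy an exact linear recurrence with constant (periodic) forcing. Solving it gives $\xi(-p/q)^n=$ (rational periodic term) $+\,C(-p/q)^n$ with $C$ forced to be such that the $y_n$ are integers. This contradicts the hypothesis via the usual argument that $q^n\mid$ something for all $n$ when $q>1$, or yields $\xi\in\Q$ when $q=1$. For aperiodic patterns, a run structure argument should locate a switch of the needed type, to which the inequality of Step 2 applies.
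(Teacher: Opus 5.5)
Your proposal has a genuine gap: Step 2, which is supposed to produce the constant, does not deliver it.

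\textbf{The algebra at the end of Step 2.} The inequality $1\le \ell'(p+q-q^2/p)=\ell' p(1+r-r^2)$ means $\ell'\ge 1/(p(1+r-r^2))$. That is weaker even than Dubickas' $1/p$; it is not $\ell'\ge(1+r-r^2)/p$.

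\textbf{Why a local window cannot suffice.} In correct bookkeeping the factor $1+r-r^2$ sits on the integer side. With $w_n=pz_n+qz_{n+1}$ one has $w_n-rw_{n+1}+r^2w_{n+2}=p(z_n+r^3z_{n+3})$. So two positions $n,m$ whose integer blocks of length three differ by $\pm(1,-1,-1)$ give only $1+r-r^2\le p\ell'(1+r^3)$. A single switch between consecutive indices gives only $p\ell'\ge 1$. Comparing blocks of any bounded length $K$ always leaves a loss factor $1+r^K$ from the uncontrolled tails. To reach $(1+r-r^2)/p$ you need, for every $K$, two occurrences with discrepancy $\pm(1,-1,-1)$ followed by a common continuation of length at least $K$. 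Your sentence ``a run structure argument should locate a switch of the needed type'' does not provide this, and producing it is the actual substance of the proof.

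\textbf{How the paper does it.} It works with $s_n$ (your $-k_n$) and the identity $S_n(-r)=p\,y_n$.
\begin{enumerate}
\item It shows that $s_n$ takes only two adjacent values. Your justification of this in Step 1 is wrong. Under your hypothesis the interval containing $k_n$ has length $(p+q)\ell'<(1+r)(1+r-r^2)$, not $<1+r$, and this can exceed $2$ (e.g.\ $p=5$, $q=4$ gives about $2.088$). The paper instead compares blocks $(\mu,\mu')$ and $(\nu,\nu')$ and uses $(1+r^2)(1+r-r^2)=2-(1-r)(1-r^3)<2$.
\item It excludes $010$ and $101$, and one of $0111$, $1000$, by similar short comparisons.
\item It deduces $S=1^y0^{z_0}110^{z_1}110^{z_2}\cdots$, with every $z_n$ ($n\ge 1$) even and in $\{k,k+2\}$.
\item It applies Morse--Hedlund to the non-periodic word $z_1z_2\cdots$. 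This yields subwords $10^{k+2}110^{u_1}\cdots110^{u_n}$ and $0110^{k}110^{u_1}\cdots110^{u_n}$ with $n$ arbitrary, hence $(1+r^m)(B-A)>1+r-r^2$ with $m>4n$.
\end{enumerate}
Your non-periodicity sketch is in the right spirit; the paper does it via $t_n=ps_{2n}-qs_{2n+1}$ and Dubickas' lemma for $p^2/q^2$. But non-periodicity is only the input to this combinatorial argument, and that argument is missing from your proposal.
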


While Dubickas proved various bounds for the limit values of 
$(\{\xi(-p/q)^n\})_{n\ge 0}$ \cite[Theorems~2, 4]{Dubickas-neg} and 
$(\lVert\xi(-p/q)^n\rVert)_{n\ge 0}$ \cite[Theorem~3]{Dubickas} (where 
$\lVert x\rVert=\min(\{x\},1-\{x\})$ denotes the distance to the nearest 
integer), no bound of the form \eqref{eq:t} better than $1/p$ was known to 
the best of our knowledge. In the case $(p,q)=(3,2)$ which is the negative 
analogue of Mahler's question, our bound is $(1+2/3-4/9)/3=11/27$. 

For a sequence $S=(s_n)_{n\ge 0}$ we let $S_n(x)=\sum_{i=0}^\infty s_{n+i} 
x^i$ denote the (shifted) generating series. We deduce Theorem \ref{t:1} from 
the following optimal bound. 

\begin{theorem}\label{t:2}
For any real number $0<r<1$ and any bounded sequence $S=(s_n)_{n\ge 0}$ of 
integers that is not ultimately periodic, we have 
\begin{equation}\label{eq:2}
\limsup_{n\to \infty} S_n(-r)-\liminf_{n\to \infty} S_n(-r)\ge 1+r-r^2.
\end{equation}
Moreover, there exists a bounded sequence $S$ of integers that is not 
ultimately periodic such that equality holds in \eqref{eq:2} for all $0<r<1$. 
\end{theorem}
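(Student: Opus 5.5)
The plan is to work with the shifted series directly. Write $x=-r$. The basic identity is $S_n(x)=s_n+xS_{n+1}(x)$, which gives
\[
S_n(-r)=s_n-rS_{n+1}(-r).
\]
Put $M=\limsup_n S_n(-r)$, $m=\liminf_n S_n(-r)$ and $D=M-m$. Assume for contradiction that $D<1+r-r^2$. Since $S$ is bounded and integer-valued, only finitely many values occur, so after a shift we may assume every value $s_n$ appears infinitely often. We may also assume that, for $n$ large, all $S_n(-r)$ lie in $[m-\epsilon,M+\epsilon]$.

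\textbf{Step 1: local constraints.} The identity shows that $s_n=S_n(-r)+rS_{n+1}(-r)$ lies in an interval of length about $(1+r)D$. A first consequence is that $D\ge 1/(1+r)$, since otherwise $s_n$ would eventually be constant, and then $S$ would be ultimately periodic. In the regime $D<1+r-r^2<1+r$, the values of $s_n$ eventually lie in a set of at most two consecutive integers, say $\{a,a+1\}$. Subtracting the constant sequence $a$ only shifts $S_n(-r)$ by $a/(1+r)$, so we may take $s_n\in\{0,1\}$. Both values occur infinitely often, because otherwise $S$ is ultimately constant.

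\textbf{Step 2: two-step constraints.} Iterate the identity once more:
\[
S_n(-r)=s_n-rs_{n+1}+r^2S_{n+2}(-r).
\]
Then look at which finite patterns $s_ns_{n+1}s_{n+2}\cdots$ can occur infinitely often. Each pattern $w$ of length $k$ forces $S_n(-r)$ into the interval
\[
\sum_{i<k}w_i(-r)^i+(-r)^k\,[m,M].
\]
The goal is to show that the patterns "$10$" and "$01$", together with their extensions, push $M$ up or $m$ down enough to give $D\ge 1+r-r^2$. Concretely:
\begin{itemize}
\item An occurrence of $1$ followed by $0$ gives $S_n=1-r\cdot 0+r^2S_{n+2}$, so $M\ge 1+r^2m$, which is sharp to first order.
\item An occurrence of $0$ followed by $1$ gives $m\le -r+r^2M$.
\end{itemize}
Subtracting yields $D\ge 1+r-r^2D$, hence $D\ge (1+r)/(1+r^2)$. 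This is weaker than the target, so these two estimates alone are not enough.

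\textbf{Step 3: the main obstacle.} To reach $1+r-r^2$ I need to use non-periodicity more strongly. In a non-ultimately-periodic $0/1$ sequence, both patterns $00$ and $11$ occur infinitely often, or some long block structure repeats irregularly. Indeed, if neither $00$ nor $11$ occurs eventually, then the sequence is eventually $0101\ldots$, which is periodic. The case analysis is as follows.
\begin{itemize}
\item If $11$ occurs infinitely often, examine the longer patterns $110$ and $011$.
\item If $00$ occurs infinitely often, examine $100$ and $001$.
\item Non-periodicity forces a "defect", namely a pattern like $1\,10$ or $0\,01$, appearing somewhere inside an otherwise alternating stretch. Choosing indices $n$ at which the tail $S_{n+k}$ approximates $M$ or $m$ appropriately then gives sharper bounds such as $M\ge 1+r^2\cdot(\text{value near }m)$ and $m\le -r+r^2-r^3\cdot(\cdots)$.
\end{itemize}
The difficulty is that the extremal tails cannot always be chosen freely after a given prefix. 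I would handle this by a compactness argument: pass to a limit sequence in the shift orbit closure, where $M$ and $m$ are attained exactly as values $T_0(-r)$ of limit sequences $T$. I expect it to be necessary to prove that the orbit closure contains a non-periodic point, and then to extract an optimal inequality of the form $D\ge 1+r-r^2$ from a specific defect pattern.

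\textbf{Step 4: the equality case.} I would use a Sturmian-type sequence with very sparse defects, for instance an alternating sequence $1010\ldots$ with isolated insertions whose gaps tend to infinity. Its $\limsup$ and $\liminf$ are computed from the patterns around the defects, and I would verify that they give exactly $1+r-r^2$ for every $r$ simultaneously.
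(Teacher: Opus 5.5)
Your proposal has a genuine gap at its core: Step 3 contains no argument. It has only the bound $D\ge(1+r)/(1+r^2)$, which you correctly note is too weak, followed by a description of a hoped-for case analysis. The missing idea is a chain of forbidden-pattern statements. Each one comes from applying $(S_n-S_m)-x^k(S_{n+k}-S_{m+k})=\sum_{i<k}(s_{n+i}-s_{m+i})x^i$ with $x=-r$ to two well-chosen subwords, and bounding both $|S_n-S_m|$ and $|S_{n+k}-S_{m+k}|$ by the oscillation $D$.
\begin{enumerate}
\item Comparing $10a$ with $010$ gives $(1+r^3)D\ge 1+r$. Since $(1+r^3)(1+r-r^2)=1+r-r^2(1-r)(1-r^2)<1+r$, the word $010$ cannot occur, and symmetrically neither can $101$.
\item Comparing $1000$ with $0111$ gives $(1+r^4)D\ge 1+r-r^2+r^3$, again exceeding what $D<1+r-r^2$ allows, so one of these words is absent. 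After $s\mapsto 1-s$ the sequence has the form $1^y0^{z_0}110^{z_1}110^{z_2}\cdots$ with $z_n\ge 2$ for $n\ge 1$.
\item Comparing $10^{z}1$ with $0110^{z-1}$ forces every $z_n$ ($n\ge1$) to be even.
\item Comparing $10^{z+3}$ with $0110^{z}1$ forces $z_n\in\{k,k+2\}$ for some even $k\ge2$.
\end{enumerate}
Only at this point is non-periodicity used. It enters through Morse--Hedlund applied to the block-length word $z_1z_2\cdots$, not through finding a non-periodic point in an orbit closure. This produces subwords $10^{k+2}110^{u_1}\cdots110^{u_n}$ and $0110^{k}110^{u_1}\cdots110^{u_n}$. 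Their letterwise difference is $1,-1,-1$ followed by an arbitrarily long run of zeros, contributing exactly $1+r-r^2$. Hence $(1+r^L)D\ge 1+r-r^2$ with $L\to\infty$.

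Two of your concrete claims are also incorrect.

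In Step 1, $D<1+r-r^2$ does not give $(1+r)D<2$ once $r>(\sqrt5-1)/2$, since $(1+r)(1+r-r^2)=1+2r-r^3>2$ there (e.g.\ $r=0.8$). So three consecutive letter values are not excluded by your argument. The fix is to compare a maximal letter $\mu$ followed by some $\mu'<\mu$ with a minimal letter $\nu$ followed by some $\nu'>\nu$. This gives $(1+r^2)D\ge(\mu-\nu)(1-r)+2r$, which is at least $2$ if $\mu-\nu\ge2$, whereas $(1+r^2)(1+r-r^2)=2-(1-r)(1-r^3)<2$.

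In Step 4, your candidate (an alternating sequence with sparse insertions) contains $010$ and $101$ infinitely often. By the first comparison above, its oscillation is therefore at least $(1+r)/(1+r^3)$, which is strictly larger than $1+r-r^2$, so it never attains equality for any $r$. The extremal sequences look the opposite way: even-length blocks of zeros separated by $11$. An example is $0^{z_0}110^{z_1}11\cdots$ with $z_n=k+2w_n$, where $k\ge2$ is even and $(w_n)$ is Sturmian. The balance property of $(w_n)$ is what bounds $S_j(-r)-S_l(-r)$ by $1+r-r^2$ for all $j,l$ simultaneously.
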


Our proofs use techniques developed by Dubickas in \cite{Dubickas-BLMS} and 
\cite{Dubickas}.  One key fact that allows one to deduce Theorem \ref{t:1} 
from \eqref{eq:2} is that the sequence $(p\lfloor \xi (-p/q)^n 
+\eta\rfloor+q\lfloor \xi (-p/q)^{n+1} +\eta\rfloor)_{n\ge 0}$ of integers is 
not ultimately periodic. This was observed in various settings by Dubickas 
and Novikas (\cite[Lemma~2]{DN}, \cite[Theorem~3]{Dubickas-BLMS}, 
\cite[Lemma~1]{Dubickas}, \cite[Lemma~9]{Dubickas-neg}). The inequality 
\eqref{eq:2} is deduced from the Morse--Hedlund theorem on subword 
complexity. One difference with Dubickas' proof of \eqref{eq:Dubickas} is 
that we exploit the alternating sign in $S_n(-r)$, which leads to the optimal 
bound in Theorem \ref{t:2}. 

It follows immediately from Theorem \ref{t:1} that the image in $\R/\Z$ of 
the sequence $(\xi (-p/q)^n)_{n\ge 0}$ is not contained in any interval of 
length less than $(1+r-r^2)/p$. In subsequent work \cite{LZ}, we will examine 
the question whether the image can lie in an interval of length equal to 
$(1+r-r^2)/p$. In particular, we will show that the bound in Theorem 
\ref{t:1} is the best possible in the case $q=1$. The analogous question for 
powers of positive rational numbers was studied by several authors 
(\cite{FLP}, \cite{Bugeaud-linear}, \cite{BD}, \cite{Dubickas-small}, 
\cite{Dubickas-large}). 

\subsection*{Acknowledgments}
This work was partially supported by National Natural Science Foundation of 
China (grant numbers 12125107, 12271037, 12288201), Chinese Academy of 
Sciences Project for Young Scientists in Basic Research (grant number 
YSBR-033). 

\section{Proof of Theorem \ref{t:2}}

Our proof of \eqref{eq:2} uses the following consequence of the 
Morse--Hedlund theorem on subword complexity \cite[Theorem 7.3]{MH}. 

\begin{lemma}\label{l:MH}
Let $W$ be an infinite word over a finite alphabet that is not ultimately 
periodic and let $n\ge 0$. Then there exist a word $U$ of length $n$ and 
letters $s\neq t$ such that $sU$ and $tU$ are subwords of $W$. 
\end{lemma}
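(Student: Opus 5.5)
We derive Lemma \ref{l:MH} from the Morse--Hedlund theorem \cite[Theorem 7.3]{MH}: if the complexity function $p_W(m)$, the number of distinct subwords of length $m$ of $W$, satisfies $p_W(m)\le m$ for some $m\ge 1$, then $W$ is ultimately periodic. We argue by contradiction. Fix $n\ge 0$ and suppose that for every word $U$ of length $n$ there is at most one letter $s$ with $sU$ a subword of $W$. Since $W$ is one-sided infinite, we should look at the tail $W'=(w_k)_{k\ge 1}$, obtained by deleting the first letter. Every occurrence of a subword $U$ in $W'$ then starts at a position $k\ge 1$, so it is preceded by a letter, namely $w_{k-1}$.

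The first step is to count. Let $L_m$ be the set of subwords of length $m$ of $W$, and let $L'_m$ be the corresponding set for $W'$. Consider the map $L'_{n+1}\to L'_n$ that deletes the first letter. It is surjective, because a subword of length $n$ of $W'$ can be extended on the right inside $W'$. Each $tU\in L'_{n+1}$ is also a subword of $W$, so our hypothesis says every fibre of this map has at most one element. Hence $|L'_{n+1}|=|L'_n|$. Both sides are finite, since the alphabet is finite.

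The second step is to iterate and apply Morse--Hedlund. The hypothesis also holds for every longer suffix length. Indeed, if $sV$ and $tV$ are subwords with $|V|=m\ge n$ and $s\neq t$, then truncating $V$ to its first $n$ letters gives $sU$ and $tU$ with $|U|=n$. So $|L'_m|$ is constant for $m\ge n$, and $p_{W'}$ is bounded. Taking $m$ larger than this bound gives $p_{W'}(m)\le m$, so $W'$ is ultimately periodic by Morse--Hedlund, and therefore $W$ is too. This contradicts the assumption and proves Lemma \ref{l:MH}.

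The main subtlety is the passage from $W$ to $W'$. Left extensions of subwords of $W$ may fail to exist for prefixes of $W$, so surjectivity of a "delete the first letter" map would fail if we worked with $W$ directly. With $W'$, left extensions are always available, and any two-sided branching found in $W'$ is also branching in $W$. Alternatively, one can avoid the complexity statement altogether with a direct argument: if left extensions are unique for all words of length $n$, the letter $w_{k-1}$ is determined by $w_k\cdots w_{k+n-1}$. By pigeonhole, some block of length $n$ recurs at positions $i<j$. Backward determinism then propagates the equality $w_{i-\ell}=w_{j-\ell}$ down to $w_0$, and the right-hand behaviour needs separate care. For this reason the counting argument via Morse--Hedlund is the cleaner route, and we would use it.
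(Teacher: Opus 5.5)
Your argument is correct in substance and is essentially the paper's proof in contrapositive form. The paper takes Morse--Hedlund in the form $p_W(n+1)>p_W(n)$ and applies the pigeonhole principle to the map $L_{n+1}\to L_n$ that deletes the first letter. You instead assume this map is injective in all lengths $\ge n$ and deduce that the complexity is bounded.

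One justification is wrong, though, and it is the one your ``main subtlety'' rests on. Extending on the right shows that deleting the \emph{last} letter is surjective, not deleting the first. A subword $U$ of $W'$ occurring only at position $1$ of $W$ has its left extension $w_0U$ in $W$, but not necessarily in $W'$. For example, with $W=0100\cdots$ the letter $1\in L'_1$ has no preimage in $L'_2=\{10,00\}$.

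This does no damage, since you only use that fibres have at most one element. Injectivity gives $|L'_{m+1}|\le|L'_m|$ for all $m\ge n$, and that is all you need to bound $p_{W'}$. For the same reason the detour through $W'$ is unnecessary. The first-letter-deleting map $L_{m+1}\to L_m$ is well defined for $W$ itself, and neither injectivity nor the paper's pigeonhole collision from $|L_{n+1}|>|L_n|$ requires surjectivity.
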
 

\begin{proof}
This is a special case of \cite[Lemma~2]{Dubickas} or \cite[Corollary 
A.4]{Bugeaud}. We include a proof for the sake of completeness. Let $p_W(n)$ 
denote the number of subwords of $W$ of length $n$. By the Morse--Hedlund 
theorem, $p_W(n+1)>p_W(n)$. The assertion then follows from the pigeonhole 
principle. 
\end{proof}

We use Sturmian words in the proof of the last assertion of Theorem 
\ref{t:2}. Recall that an infinite word $s_0s_1\dots$ over the alphabet 
$\{0,1\}$ or a sequence $(s_0,s_1,\dots)$ with values in $\{0,1\}$ is 
\emph{Sturmian} if and only if there exists an irrational $\theta\in (0,1)$  
and a $\rho\in \R$ such that 
\begin{equation}\label{eq:S1}
s_n=\lfloor (n+1)\theta +\rho\rfloor -\lfloor 
n\theta +\rho\rfloor
\end{equation}
for all $n\ge 0$ or 
\begin{equation}\label{eq:S2}
s_n=\lceil (n+1)\theta +\rho\rceil -\lceil 
n\theta +\rho\rceil
\end{equation} 
for all $n\ge 0$. 

\begin{lemma}\label{l:Sturm}
Let $S=s_0s_1\dots$ be a Sturmian word over the alphabet $\{0,1\}$. Then $S$ 
is not ultimately periodic. Moreover, for all $m,n\ge 0$, the word 
$(s_{m}-s_n)(s_{m+1}-s_{n+1})\dots$ has no subword of the form $10^l1$ or 
$(-1)0^l(-1)$ for $l\ge 0$.  Furthermore, if $\sigma_n=\sum_{i=0}^{n-1} s_i$ 
denotes the partial sum, then, for all $m, n,k\ge 0$, we have 
$(\sigma_{m+k}-\sigma_{m})-(\sigma_{n+k}-\sigma_{n})\in \{-1,0,1\}$. 
\end{lemma}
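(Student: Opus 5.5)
Everything reduces to the partial sums. In case \eqref{eq:S1} we have $\sigma_n=\lfloor n\theta+\rho\rfloor-\lfloor\rho\rfloor$, and in case \eqref{eq:S2} we have $\sigma_n=\lceil n\theta+\rho\rceil-\lceil\rho\rceil$. Since the sequence is $0/1$-valued, it suffices to treat \eqref{eq:S1}; the ceiling case is identical, replacing $\lfloor\cdot\rfloor$ by $\lceil\cdot\rceil$ throughout. Write $x=m\theta+\rho$ and $y=n\theta+\rho$. Then
\[
D_k\colonequals(\sigma_{m+k}-\sigma_m)-(\sigma_{n+k}-\sigma_n)
=\bigl(\lfloor x+k\theta\rfloor-\lfloor x\rfloor\bigr)-\bigl(\lfloor y+k\theta\rfloor-\lfloor y\rfloor\bigr).
\]
Use $\lfloor x+k\theta\rfloor-\lfloor x\rfloor=k\theta+\{x\}-\{x+k\theta\}$. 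This gives $D_k=(\{x\}-\{y\})-(\{x+k\theta\}-\{y+k\theta\})$. Each bracket lies in $(-1,1)$, so $|D_k|<2$. Since $D_k$ is an integer, $D_k\in\{-1,0,1\}$, which proves the third assertion.

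For the second assertion, set $d_k=s_{m+k}-s_{n+k}=D_{k+1}-D_k$. A subword $10^l1$ starting at position $j$ means $d_j=1$, $d_{j+1}=\dots=d_{j+l}=0$ and $d_{j+l+1}=1$. Summing gives $D_{j+l+2}-D_j=2$. This contradicts $D_k\in\{-1,0,1\}$ unless $D_j=-1$ and $D_{j+l+2}=1$. But it contradicts it in every case, since the difference of two values in $\{-1,0,1\}$ equals $2$ only in that case.

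I have to argue more carefully there. I will show that $D_k$ actually takes values in a set of the form $\{c,c+1\}$ for a fixed $c$ depending only on $m,n$. Indeed, $D_k=\lfloor x+k\theta\rfloor-\lfloor y+k\theta\rfloor-(\lfloor x\rfloor-\lfloor y\rfloor)$. For fixed $u=x-y$, the quantity $\lfloor z+u\rfloor-\lfloor z\rfloor$ takes only the two values $\lfloor u\rfloor$ and $\lfloor u\rfloor+1$. Hence $D_k$ lies in a two-element set of consecutive integers, so $D_{j+l+2}-D_j=2$ is impossible. The case $(-1)0^l(-1)$ is symmetric, giving difference $-2$. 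This refined two-value observation is the key point I expect to need; the crude bound $|D_k|\le1$ alone is not sufficient.

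For non-periodicity, suppose $s_{n+P}=s_n$ for all $n\ge N$ with $P\ge1$. Then $\sigma_{n+P}-\sigma_n$ is constant for $n\ge N$, say equal to $c$. Hence $\sigma_{N+jP}=\sigma_N+jc$, so $\sigma_n/n\to c/P$. But $\sigma_n=n\theta+O(1)$, so $\sigma_n/n\to\theta$, forcing $\theta=c/P\in\Q$. This is a contradiction. None of these steps seems a real obstacle; the only subtlety is using the two-value property rather than just $|D_k|\le1$ in the second assertion.
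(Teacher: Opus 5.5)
Your proof is correct and follows the paper's route: writing the telescoped partial sums via fractional parts gives $|D_k|<2$, the limit $\sigma_n/n\to\theta\notin\Q$ rules out ultimate periodicity, and your two-consecutive-values argument for the second assertion is valid. That refinement is not really extra, though, and your remark that the crude bound is insufficient is mistaken: $D_{j+l+2}-D_j$ is exactly the third assertion applied at the shifted indices $m+j$, $n+j$ with $k=l+2$, which is how the paper gets the second assertion from the same identity, so the $\{-1,0,1\}$ bound suffices once used for all $m,n$, and you should also delete the self-contradictory sentence that precedes your refinement.
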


Here, for a letter $a$, $a^l$ denotes the word $a\dots a$ of length $l$.

\begin{proof}
The first assertion is standard and the second and third assertions are 
equivalent forms of the fact that Sturmian words are balanced (see \cite{MH2} 
or \cite[Theorem 2.1.13]{Lothaire}). We include a proof for the sake of 
completeness. The first assertion follows from the identity 
\[\lim_{n\to \infty} \frac{\sum_{i=0}^{n-1} s_i}{n} = \lim_{n\to \infty} \frac{\lfloor n\theta+\rho\rfloor -\lfloor \rho\rfloor}{n}=\theta\]
and the observation that the limit is rational if $(s_n)_{n\ge 0}$ is 
ultimately periodic. The second and third assertions follow from the 
observation that, in the case \eqref{eq:S1}, we have 
\begin{multline*}
(\sigma_{m+k}-\sigma_{m})-(\sigma_{n+k}-\sigma_{n})=\sum_{i=0}^{k-1}(s_{m+i}-s_{n+i})\\
=(\lfloor (m+k)\theta +\rho\rfloor 
-\lfloor m\theta +\rho\rfloor)-(\lfloor (n+k)\theta +\rho\rfloor -\lfloor 
n\theta +\rho\rfloor)\\
=-(\{ (m+k)\theta +\rho\} -\{ m\theta +\rho\})+(\{ 
(n+k)\theta +\rho\} -\{ n\theta +\rho\})\\
\in (-2,2)\cap \Z= \{-1,0,1\}
\end{multline*}
and the same holds in the case \eqref{eq:S2} after replacing 
$\lfloor\cdot\rfloor$ by $\lceil\cdot\rceil$ and $\{\cdot\}$ by 
$-\{-(\cdot)\}$. 
\end{proof}

Let $S=(s_n)_{n\ge 0}$ be a sequence of integers. For $n,k\ge 0$, we have
\[S_n(x)-x^kS_{n+k}(x)=\sum_{i=0}^{k-1} s_{n+i}x^i.\]
Thus, for $n,m,k\ge 0$, we have
\begin{equation}\label{eq:triv} 
(S_n(x)-S_m(x))-x^k(S_{n+k}(x)-S_{m+k}(x))=\sum_{i=0}^{k-1} (s_{n+i}-s_{m+i})x^i.
\end{equation}
We will say that we apply \eqref{eq:triv} to the subwords 
$(s_n,\dots,s_{n+k-1})$ and $(s_m,\dots,s_{m+k-1})$. 

We now state and prove an analogue of Theorem \ref{t:2} for $S_n(r)$. 
Although not logically required, we hope that it makes the proof of Theorem 
\ref{t:2} easier to understand. 

\begin{prop}
For any real number $0<r<1$ and any bounded sequence $S=(s_n)_{n\ge 0}$ of 
integers that is not ultimately periodic, we have 
\begin{equation}\label{eq:p}
\limsup_{n\to \infty} S_n(r)-\liminf_{n\to \infty} S_n(r)\ge 1.
\end{equation}
Moreover, there exists a bounded sequence $S$ of integers that is not 
ultimately periodic such that equality holds in \eqref{eq:p} for all $0<r<1$.
\end{prop}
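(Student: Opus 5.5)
Write $L=\limsup_{n\to\infty} S_n(r)$ and $\ell=\liminf_{n\to\infty} S_n(r)$; both are finite because $S$ is bounded and $0<r<1$. For the inequality \eqref{eq:p}, fix $\varepsilon>0$. Choose $N$ such that $S_j(r)\in(\ell-\varepsilon,L+\varepsilon)$ for all $j\ge N$. Since $S$ is not ultimately periodic, neither is its tail $(s_j)_{j\ge N}$. This tail takes finitely many values, so we may view it as an infinite word over a finite alphabet.

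Apply Lemma~\ref{l:MH} to the tail with word length $k$, where $k$ is large. It gives a word $U$ of length $k$ and letters $s\neq t$ such that $sU$ and $tU$ are both subwords. We may swap the names so that $s>t$, hence $s-t\ge 1$. Say $sU$ starts at position $n\ge N$ and $tU$ starts at position $m\ge N$. Apply \eqref{eq:triv} to these two subwords of length $k+1$, at $x=r$. The right-hand side equals $(s-t)+\sum_{i=1}^{k}0\cdot r^i=s-t\ge1$. Therefore
\[S_n(r)-S_m(r)\ge 1+r^{k+1}\bigl(S_{n+k+1}(r)-S_{m+k+1}(r)\bigr).\]
All four indices are at least $N$, so each bracketed difference is bounded in absolute value by $L-\ell+2\varepsilon$. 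It follows that $L-\ell+2\varepsilon\ge S_n(r)-S_m(r)\ge 1-r^{k+1}(L-\ell+2\varepsilon)$. Let $k\to\infty$ and then $\varepsilon\to0$ to obtain $L-\ell\ge1$. The only delicate point is that we need both occurrences to lie beyond $N$; this is why we apply the lemma to the tail.

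For the sharpness statement, take $S$ to be a Sturmian word. By Lemma~\ref{l:Sturm} it is not ultimately periodic, and it is clearly bounded. By the previous part it suffices to show $S_n(r)-S_m(r)\le 1$ for all $n,m$. Put $d_i=s_{n+i}-s_{m+i}\in\{-1,0,1\}$. Lemma~\ref{l:Sturm} says the nonzero $d_i$ alternate in sign: the word $(d_i)$ contains no $10^l1$ and no $(-1)0^l(-1)$. Hence
\[S_n(r)-S_m(r)=\sum_{i\ge0}d_ir^i\]
has nonzero terms $\pm r^{i_1}\mp r^{i_2}\pm\cdots$ with alternating signs and $i_1<i_2<\cdots$. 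Such an alternating series with strictly decreasing absolute values $r^{i_j}$ is bounded above by its first positive term. Concretely, if the first nonzero $d$ equals $1$, the sum is at most $r^{i_1}\le1$. If it equals $-1$, the sum is negative, hence less than $1$. Taking the supremum over $n,m$ gives $L-\ell\le1$, so equality holds for every $0<r<1$. I expect no serious obstacle here; the main care is the elementary alternating-series bound, including the finite-sum case.
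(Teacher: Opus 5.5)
Your proof is correct and follows the paper's argument: Lemma~\ref{l:MH} combined with \eqref{eq:triv} gives the lower bound, and the alternating-sign (balance) property from Lemma~\ref{l:Sturm} gives sharpness. The differences are cosmetic. You skip the paper's preliminary step showing $\mu-\nu=1$, which is unnecessary here because $s-t\ge 1$ already suffices, and you work with an $\varepsilon$-tail rather than an interval $(A,B)$ and truncation.
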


The inequality \eqref{eq:p} is a special case of \cite[Lemma~3]{Dubickas}. 

\begin{proof}
We include a proof of \eqref{eq:p} for the sake of completeness. Assume that 
the set of limit values of $(S_n(r))_{n\ge 0}$ is contained in an interval 
$(A,B)$ of length $<1$. Then, up to shifting (and truncating) $S$, we may 
assume that $S_n(r)\in (A,B)$ for all $n\ge 0$. Let $\mu =\max_{n\ge 0} s_n$ 
and $\nu=\min_{n\ge 0} s_n$. Since $S$ is not ultimately periodic, we have 
$\mu>\nu$. By \eqref{eq:triv}, 
\[2>(1+r)(B-A)>\mu-\nu.\] 
Thus $\mu-\nu=1$ and consequently $s_n\in \{\nu,\mu\}$ for all $n\ge 0$. We 
regard $S$ as a word $s_0s_1\dots$ over the alphabet $\{\nu,\mu\}$. Let $n\ge 
0$. By Lemma \ref{l:MH}, there exists a word $U=u_1\dots u_n$ of length $n$ 
such that $\mu U$ and $\nu U$ are subwords of $S$. By \eqref{eq:triv} applied 
to these subwords, 
\[(1+r^{n+1})(B-A)>\mu-\nu=1.\] 
Since $n\ge 0$ is 
arbitrary, we get $B-A\ge 1$. Contradiction. This finishes the proof of 
\eqref{eq:p}. 

Now let $S$ be a Sturmian sequence with values in $\{0,1\}$. Then for all 
$m,n\ge 0$, 
\[S_m(r)-S_n(r)=\sum_{i=0}^\infty (s_{m+i}-s_{n+i})r^i\le 1.\]
Here in the inequality we used the fact that the sequence 
$(s_{m+i}-s_{n+i})_{i\ge 0}$ takes values in $\{-1,0,1\}$ and alternates in 
sign if all zeroes are removed (Lemma \ref{l:Sturm}).  
\end{proof}

\begin{proof}[Proof of Theorem \ref{t:2}]
Assume that the set of limit values of $(S_n(-r))_{n\ge 0}$ is contained in 
an interval $(A,B)$ of length $<1+r-r^2$. Then, up to shifting (and 
truncating) $S$, we may assume that $S_n(-r)\in (A,B)$ for all $n\ge 0$. 

Let $\mu =\max_{n\ge 0} s_n$ and $\nu=\min_{n\ge 0} s_n$. Since $S$ is not 
ultimately periodic, we have $\mu>\nu$. Moreover, there exists a subword 
$(\mu,\mu')$ of $S$ with $\mu'<\mu$. Similarly, there exists a subword 
$(\nu,\nu')$ of $S$ with $\nu'>\nu$. If $\mu-\nu \ge 2$, then, by 
\eqref{eq:triv} applied to the subwords $(\mu,\mu')$ and $(\nu,\nu')$, 
\begin{multline*}
2-(1-r)(1-r^3)=(1+r^2)(1+r-r^2)\\
>(\mu-\nu)-r(\mu'-\nu')\ge (\mu-\nu)-r(\mu-\nu-2)\ge 2,
\end{multline*}
which is a contradiction. Thus $\mu-\nu=1$. Up to replacing $s_n$ by 
$s_n-\nu$, we may assume that $s_n\in \{0,1\}$ for all $n\ge 0$. For 
notational convenience, we now regard $S$ as a word $s_0s_1\dots$ over the 
alphabet $\{0,1\}$. By the above $01$ and $10$ are subwords of $S$. 

Note that $S$ has a subword of the form $10a$, where $a\in \{0,1\}$. If $010$ 
is a subword of $S$, then, by \eqref{eq:triv} applied to these subwords, we 
have 
\[1+r-r^2(1-r)(1-r^2)=(1+r^3)(1+r-r^2)>1+r+ar^2\ge 1+r,\]
which is a contradiction. Thus $010$ is not a subword of $S$. Similarly, 
$101$ is not a subword of $S$. 

Next note that $0111$ and $1000$ are not both subwords of $S$. Indeed, 
otherwise, by \eqref{eq:triv} applied to these subwords, we would have
\[1+r-r^2+r^3-r^3(1-r)(1-r^2)=(1+r^4)(1+r-r^2)>1+r-r^2+r^3.\] 
Up to replacing $s_n$ by $1-s_n$ for all $n\ge 0$, we may assume that $0111$ 
is not a subword of $S$. Then all blocks of zeroes in $S$ are separated by 
$11$. Thus $S=1^y0^{z_0}110^{z_1}110^{z_2}\dots$, where $y\ge 0$, $z_0\ge 1$, 
and $z_n\ge 2$ for all $n\ge 1$.

Assume that $z_n$ is odd for some $n\ge 1$. Then, by \eqref{eq:triv} applied 
to the subwords $10^{z_n}1$ and $0110^{z_n-1}$ of $S$, we have 
\[1+r-r^2+r^{z_n+1}-r^{z_n+1}(1-r)(1-r^2)=(1+r^{z_n+2})(1+r-r^2)>1+r-r^2+r^{z_n+1}.\]
Contradiction. 

Thus $z_n$ is even for all $n\ge 1$. Assume that $z_m-z_n>2$ for some $m,n\ge 
1$. Then $10^{z_n+3}$ and $0110^{z_n}1$ are both subwords of $S$. Thus, by 
\eqref{eq:triv} applied to these subwords, we have 
\[1+r-r^2+r^{z_n+3}-r^{z_n+3}(1-r)(1-r^2)=(1+r^{z_n+4})(1+r-r^2)>1+r-r^2+r^{z_n+3}.\]
Contradiction. 

Thus there exists an even integer $k\ge 2$ such that $z_n\in\{k,k+2\}$ for 
all $n\ge 1$. Since $S$ is not ultimately periodic, neither is the word 
$Z=z_1z_2\dots$. Let $n\ge 0$ be an integer. By Lemma \ref{l:MH} applied to 
$Z$, there exists a word $U=u_1\dots u_n$ of length $n$ over $\{k,k+2\}$ such 
that $(k+2)U$ and $kU$ are subwords of $Z$. Then 
$10^{k+2}110^{u_1}\dots110^{u_n}$ and $0110^{k}110^{u_1}\dots110^{u_n}$ are 
subwords of $S$.  Thus, by \eqref{eq:triv} applied to these subwords, we have 
\[(1+r^m)(B-A)>1+r-r^2,\]
where $m=k+3+\sum_{i=1}^n (u_i+2)> 4n$. Since $n\ge 0$ is arbitrary, we get 
$B-A\ge 1+r-r^2$. Contradiction. This finishes the proof of \eqref{eq:2}.

Let $k\ge 2$ be an even integer and let $W=w_0w_1\dots$ be a Sturmian word 
over the alphabet $\{0,1\}$. Let $z_n=k+2w_n$ for $n\ge 0$. Consider the word 
$S=0^{z_0}110^{z_1}11\dots$. We will show that 
\begin{equation}\label{eq:diff} 
S_j(-r)-S_l(-r)\le 1+r-r^2
\end{equation}
for all $j,l\ge 0$. 

For $n\ge 0$, let $\sigma_n=-1+\sum_{i=0}^{n-1} (z_i+2)=-1+(k+2)n+2\tau_n$, 
where $ \tau_n=\sum_{i=0}^{n-1}w_i$. Then $s_j=1$ if and only if $j=\sigma_n$ 
or $j=\sigma_n-1$ for some $n\ge 1$. For $n\ge 1$, we have 
\begin{gather}
\label{eq:Sr1}
\begin{aligned} S_{\sigma_n}(-r)&=1-\sum_{i=n+1}^\infty r^{\sigma_i-\sigma_n-1}(1-r)\\
 &>1-\sum_{i=n+1}^\infty r^{2(i-n)-1}(1-r)>1-\frac{r(1-r)}{1-r^2}=\frac{1}{1+r},
\end{aligned} 
 \\
\notag S_{\sigma_n-1}(-r)=\sum_{i=n}^\infty r^{\sigma_i-\sigma_n}(1-r)>0.
\end{gather}
Here in \eqref{eq:Sr1} we used the estimate $\sigma_i-\sigma_n\ge 
(k+2)(i-n)>2(i-n)$ for $i\ge n+1$. We have 
$S_{\sigma_n-1}(-r)=1-rS_{\sigma_n}(-r)<S_{\sigma_n}(-r)$ by \eqref{eq:Sr1}. 
Moreover, for $\sigma_{n-1}<j<\sigma_n$, 
$S_{j}(-r)=(-r)^{\sigma_n-j-1}S_{\sigma_n-1}(-r)$. Thus 
\[\max_{\sigma_{n-1}<j\le \sigma_n} S_j(-r)=S_{\sigma_n}(-r),\quad \min_{\sigma_{n-1}<j\le \sigma_n}S_j(-r)=S_{\sigma_n-2}(-r).\] 
Therefore, in order to show \eqref{eq:diff}, we may assume that $j=\sigma_m$ 
and $l=\sigma_n-2$ for some $m,n\ge 1$. In this case, 
\begin{align*}
S_{\sigma_m}(-r)-S_{\sigma_n-2}(-r)&=1+r-r^2-\sum_{i=1}^\infty (r^{\sigma_{m+i}-\sigma_m-1}-r^{\sigma_{n+i}-\sigma_n+1})(1-r)\\&\le 1+r-r^2.
\end{align*}
Here in the last inequality we used the inequality
\[(\sigma_{n+i}-\sigma_n)-(\sigma_{m+i}-\sigma_m)=2(\tau_{n+i}-\tau_n)-2(\tau_{m+i}-\tau_m)\ge -2,\]
which follows from Lemma \ref{l:Sturm}.
\end{proof}

\section{Proof of Theorem \ref{t:1}}

For $n\ge 0$, let
\begin{gather}
\notag x_n=\lfloor \xi (-p/q)^n+\eta\rfloor,\\
\label{eq:sn} s_n= -px_n-qx_{n+1}.
\end{gather}

We deduce Theorem \ref{t:1} from \eqref{eq:2} using the following lemma. 

\begin{lemma}\label{l:ap}
The sequence $(s_n)_{n\ge 0}$ is not ultimately periodic.
\end{lemma}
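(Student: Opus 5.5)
We argue by contradiction and assume that $(s_n)_{n\ge 0}$ is ultimately periodic. Write $\xi(-p/q)^n+\eta=x_n+f_n$ with $f_n=\{\xi(-p/q)^n+\eta\}\in[0,1)$. Since $q\xi(-p/q)^{n+1}=-p\xi(-p/q)^n$, we get
\[
q(x_{n+1}+f_{n+1}-\eta)=-p(x_n+f_n-\eta),
\]
that is,
\[
s_n=-px_n-qx_{n+1}=pf_n+qf_{n+1}-(p+q)\eta .
\]
In particular $(s_n)$ is bounded, consistent with the use of Theorem~\ref{t:2}. The recursion $x_{n+1}=-(s_n+px_n)/q$ shows that, given $x_0$, the sequence $(x_n)$ is determined by $(s_n)$. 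The plan is to use periodicity of $(s_n)$ to produce a rational-type linear recurrence for $x_n$ with an explicit solution, and then compare it with the exponential growth of $\xi(-p/q)^n$.

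The key steps are as follows. Suppose that $s_{n+T}=s_n$ for all $n\ge N$. Then the integer sequence $y_n=x_{n+T}-x_n$ satisfies the homogeneous recursion $py_n+qy_{n+1}=0$ for $n\ge N$. Hence $y_{n}=y_N(-p/q)^{n-N}$, and since the $y_n$ are integers we get the following dichotomy.

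\emph{Case $q>1$.} Here $\gcd(p,q)=1$ (we may reduce $p/q$ to lowest terms, as in the abstract), so $q^{n-N}$ must divide $y_N$ for all $n$, which forces $y_N=0$.

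\emph{Case $q=1$.} Here we only know that $y_n$ is $y_N(-p)^{n-N}$.

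On the other hand,
\[
y_n=\xi(-p/q)^n\bigl((-p/q)^T-1\bigr)-(f_{n+T}-f_n),
\]
so $y_n=c(-p/q)^n+O(1)$ with $c=\xi((-p/q)^T-1)\neq 0$, because $(-p/q)^T\ne 1$ and $\xi\neq0$. If $y_N=0$, then $y_n\equiv 0$ for $n\ge N$, and this contradicts $|y_n|\to\infty$. So $q>1$ is impossible, and in the case $q=1$ we must have $y_N\ne0$.

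For the remaining case $q=1$, which by hypothesis means $\xi\notin\Q$, we compare the two expressions $y_n=y_N(-p)^{n-N}$ and $y_n=c(-p)^n-(f_{n+T}-f_n)$. Dividing by $(-p)^n$ and letting $n\to\infty$ gives $c=y_N(-p)^{-N}$, and then the difference is exactly $f_{n+T}-f_n=0$. Consequently
\[
\xi\bigl((-p)^T-1\bigr)=y_N(-p)^{-N}\in\Q,
\]
so $\xi\in\Q$, a contradiction.

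The main obstacle I expect is bookkeeping rather than ideas. The first point is to make sure that $\eta$ drops out; it does, because $\eta$ cancels in $y_n$. The second is to handle $\gcd(p,q)$. If the theorem intends $p,q$ not necessarily coprime, one first replaces $(p,q)$ by $(p/d,q/d)$ in the divisibility argument: the condition $p/q\notin\Z$ means the reduced denominator exceeds $1$, which is exactly what the $q^{n-N}\mid y_N$ argument needs.
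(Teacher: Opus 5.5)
Your proof is correct, but it takes a different route from the paper. The paper does not argue directly. It forms $t_n=ps_{2n}-qs_{2n+1}=-p^2x_{2n}+q^2x_{2n+2}$, which eliminates the odd-indexed terms, and observes that this is exactly the sequence that Dubickas' Lemma~1 for the positive ratio $\alpha=p^2/q^2$ shows is not ultimately periodic. Since $p^2/q^2\in\Z$ if and only if $p/q\in\Z$, the hypotheses carry over. An eventual period $l$ of $(s_n)$ would give $t_{n+l}=t_n$ for $n\ge N/2$. So the paper disposes of the negative sign by passing to even indices and citing an external result.

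You instead work with the negative ratio directly. Periodicity makes the differences $x_{n+T}-x_n$ an integer solution of $py_n+qy_{n+1}=0$. Integrality forces this solution to vanish when the reduced denominator exceeds $1$. The growth $y_n=\xi\bigl((-p/q)^T-1\bigr)(-p/q)^n+O(1)$ forbids vanishing. In the integral case $q=1$, comparing the two expressions forces $\xi\in\Q$.

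In effect you give a self-contained proof of the cited lemma that works for either sign of the ratio. It makes explicit where each hypothesis enters: $\xi\neq0$ and $p>q$ give $c\neq0$ and $|y_n|\to\infty$, and the condition "$\xi\notin\Q$ or $p/q\notin\Z$" drives the case split. The paper's route is shorter but depends on the reference. Your closing remark about dividing $p$ and $q$ by $\gcd(p,q)$ before the divisibility step is also needed, since Theorem~\ref{t:1} as stated does not assume coprimality.
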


\begin{proof}
For $n\ge 0$, let 
\begin{multline*}
t_n=ps_{2n}-qs_{2n+1}=-p^2x_{2n}+q^2x_{2n+2}\\
=-p^2\lfloor \xi (p^2/q^2)^n+\eta\rfloor+q^2\lfloor \xi (p^2/q^2)^{n+1}+\eta\rfloor.
\end{multline*}
By \cite[Lemma~1]{Dubickas} applied to $\alpha=p^2/q^2$, $(t_n)_{n\ge 0}$ is 
not ultimately periodic. Assume that there exist $N\ge 0$ and $l\ge 1$ such 
that $s_{n+l}=s_n$ for all $n\ge N$. Then $t_{n+l}=t_n$ for all $n\ge N/2$. 
Contradiction. 
\end{proof}

\begin{proof}[Proof of Theorem \ref{t:1}]
For $n\ge 0$, let 
\[y_n=\{ \xi (-p/q)^n+\eta\}-\eta.\]
Since $x_n+y_n=\xi(-p/q)^n$, we have
\begin{equation}\label{eq:sn2}
s_n = py_n+qy_{n+1}.
\end{equation}
By \eqref{eq:sn} and \eqref{eq:sn2}, $(s_n)_{n\ge 0}$ is a bounded sequence 
of integers. Thus, by Lemma \ref{l:ap} and Theorem \ref{t:2}, \eqref{eq:2} 
holds. Since 
\[S_n(-r)=\sum_{i=0}^\infty s_{n+i} (-r)^i=\sum_{i=0}^\infty p(y_{n+i}+ry_{n+i+1})(-r)^i=py_n, \]
\eqref{eq:2} is equivalent to
\[\limsup_{n\to \infty} y_n -\liminf_{n\to \infty} y_n \ge (1+r-r^2)/p\]
and hence to \eqref{eq:t}.
\end{proof}

\end{document}